\documentclass{amsart}

\usepackage{latexsym}
\usepackage{amsmath}
\usepackage{amssymb}

\usepackage{multicol}

\usepackage[english]{babel}

\theoremstyle{plain}

\newtheorem{theorem}{Theorem}[section]
\newtheorem{proposition}{Proposition}[section]
\newtheorem{lemma}{Lemma}[section]

\theoremstyle{definition}

\newcommand{\za}{\zeta}

\newcommand{\ph}{\varphi}
\newcommand{\cph}{C_\varphi}
\newcommand{\hol}{\mathcal{H}ol}
\newcommand{\Dbb}{\mathbb D}
\newcommand{\Tbb}{\mathbb T}

\newcommand{\Cbb}{\mathbb C}

\newcommand{\bloch}{\mathcal{B}}

\DeclareDocumentCommand \lclass {m o} {\ensuremath{\mathrm L_{#1} \IfValueT {#2} {\left( #2 \right)} }}
 \DeclareDocumentCommand \hclass {m o} {\ensuremath{\mathrm H_{#1} \IfValueT {#2} {\left( #2 \right)} }}
 \DeclareDocumentCommand \hrclass {m o} {\ensuremath{\mathcal H_{#1} \IfValueT {#2} {\left( #2 \right)} }}
\DeclareDocumentCommand \apclass {m} {\ensuremath{\mathrm A_{#1}  }}
\newcommand {\nplus} {\ensuremath {\mathrm {N}^+}}

\numberwithin{equation}{section}

\begin{document}

\date{}

\author{Evgeny Abakumov}
\address{Univ Gustave Eiffel,
Univ Paris Est Creteil, CNRS, LAMA UMR8050 F-77447 Marne-la-Vall\'ee, France}
\email{evgueni.abakoumov@univ-eiffel.fr}

\author{Evgueni Doubtsov}
\address{St.~Petersburg Department
of Steklov Mathematical Institute, Fo\-ntanka 27, St.~Petersburg 191023, Russia}
\email{dubtsov@pdmi.ras.ru}

\author{Dmitry Rutsky}
\address{St.~Petersburg Department
of Steklov Mathematical Institute, Fo\-ntanka 27, St.~Petersburg 191023, Russia}
\email{rutsky@pdmi.ras.ru}

\title[Square function characterization]{Square function characterization \\ of Hardy-type spaces}

\begin{abstract}
The well-known characterization of Hardy spaces~$\mathrm{H}_{p}(\mathbb D)$, $0 < p < \infty$,
  in terms of the Littlewood-Paley
  g-function
  $$
  S f (\zeta) = \left(\int_0^1 |f' (r \zeta)|^2 (1 - r) \mathrm dr\right)^{1/2} \in \mathrm{L}_{p}
  $$
  is generalized to Hardy-type spaces~$X_A$ corresponding to quasi-Banach lattices~$X$
  on the unit circle~$\mathbb T$ under the assumption that the Hardy-Littlewood maximal operator~$M$
  is bounded in~$(X^\delta)'$ with some~$\delta > 0$.
  As an application to composition operators~$C_\varphi$, we derive an exact criterion for
  the boundedness and compactness 
  of~$C_\varphi : \bloch^\omega \to X_A$, where~$\bloch^\omega = \{f \mid \sup |f'|/\omega < \infty\}$
  is the weighted Bloch space with a log-convex radial weight~$\omega$,
  generalizing recent results in the one-dimensional setting.
\end{abstract}

\keywords{Hardy-type space, square function, Littlewood-Paley g-function, weighted Bloch space, composition operator}

\subjclass[2020]{Primary 42B30; Secondary 30H10, 30H30, 47B33}

\maketitle

\section {Introduction}

Various generalizations of the classical Hardy spaces have been studied over the years.
In the present paper, our goal is to extend the classical characterization
in terms of the Littlewood-Paley $g$-function~$S$ (see, e.g., \cite {pavlovic2013} and references contained therein),
that for brevity  we will simply call the square function,
to the Hardy-type spaces corresponding
to arbitrary quasi-Banach lattices of measurable functions satisfying a natural assumption
that by now seems to be standard for various results concerning
the real harmonic analysis on Banach Function Spaces.
This is done in the following Section~\ref {s:hts}; see Theorem~\ref {t:htssfc}.
We also give a very short proof of the extrapolation theorem from \cite {weightsandextrapolation} that we use;
see Theorem~\ref {t:extr}.

Our main motivation is an application to the composition operators given in Section~\ref {s:comp}
below; see Theorem~\ref {t:cph}.
In the one-dimensional case,
this generalizes a recent result by Chen and Hamada~\cite {ChH1}, \cite {ChH2} for the classical Hardy spaces
to a wide class of Hardy-type spaces.
However, this characterization also seems to be of independent interest:
despite the huge progress being made in the relevant areas,
and the perception that every aspect of this result is by now seemingly well-understood,
it is difficult to find in the literature any references that we could readily use.

Restricting ourselves to the one-dimensional analytic spaces on the unit disc
also lends itself to a rather
succinct and accessible treatment, although we still have to rely on a number of relatively well-known
but fairly advanced modern tools to achieve the stated generality.
It seems that very similar extensions are feasible for the Hardy spaces on strictly convex domains,
with the standard tools of real harmonic analysis on spaces of homogeneous type replacing
the arguments based on factorization, but this is, perhaps, better done in a separate publication.

\section {Hardy-type spaces}
\label {s:hts}

A convenient framework for
Hardy-type spaces corresponding to general lattices of measurable functions 
was introduced by S.~V.~Kislyakov
in~\cite {Kisliakov1999InterpolationOfHpSpacesSomeRecentDevelopments} and then substantially
refined in~\cite {kisliakov2002en}; here we adopt the latter,
briefly going over the particular features that we need.

Let $\Dbb$ denote the open unit disc of $\Cbb$ and let $\Tbb$
denote the unit circle.
Let $\hol(\Dbb)$ denote the space of holomorphic functions on~$\Dbb$.

A quasi-normed lattice of mea\-surable functions~$X$ on~$\mathbb T$ is a quasi-normed space of
measurable functions~$X$ such that the quasi-norm is compatible with the natural
order: if~$|f| \leqslant |g|$ for some function~$g \in X$ then~$f \in X$
and~$\|f\|_X \leqslant \|g\|_X$.

We say that~$X$ has the Fatou property if for any~$f_j \in X$,
$\|f_j\|_X \leqslant 1$ such that~$f_j \to f$ almost everywhere we also have~$f
\in X$ and~$\|f\|_X \leqslant 1$.
The
order dual~$X'$ can be defined as a lattice with the norm~$\|g\|_{X'} = \sup_{\|f\|_X \leqslant 1} \int |f
g|$.  For normed lattices, the Fatou property is equivalent to the order reflexivity~$(X')' = X$.
For example, $\lclass {p}' = \lclass {p'}$ for
all~$1 \leqslant p \leqslant \infty$.

For a quasi-normed lattice~$X$ of measurable functions,
the power~$X^\delta$, $\delta > 0$, is defined by the quasi-norm~$\|f\|_{X^\delta} =
\left\| |f|^{1 \slash \delta}\right\|_X^\delta$.  For instance, $\lclass {p}^\delta = \lclass {p/\delta}$.
We say that~$X$ is~$\delta$-convex if~$X^\delta$ has an equivalent norm.

For a measurable function~$w$ such that~$w > 0$ almost everywhere,
we define weighted lattices~$X (w) = \{f w \mid f \in X\}$
with the quasi-norm~$\|g\|_{X (w)} = \|g w^{-1}\|_X$.
Notice that with this definition the usual
weighted Lebesgue space with quasi-norm~$\left(\int |g|^p w \right)^{1/p}$ is denoted by~$\lclass {p} [w^{-1/p}]$.

For functions~$f$ on~$\mathbb D$ we write~$f_r (z) = f (r z)$,
$0 \leqslant r < 1$.  The radial maximal function is
$$
f^+ (\zeta) = \sup_{0 \leqslant r < 1} |f (r \zeta)|, \quad \zeta \in \mathbb T.
$$
For analytic functions~$f$ the square function is
$$
S f (\zeta) = \left(\int_0^1 |f' (r \zeta)|^2 (1 - r) \mathrm dr\right)^{1/2}, \quad\zeta \in \mathbb T.
$$

Let~$\nplus$ be the set of boundary values of the Smirnov class of analytic
functions on the disc~$\mathbb D$ (see, e.g., \cite {gavrilovetal}, \cite
{privaloven}, \cite {hoffman}).
A Hardy-type space~$X_A$ is defined for a
space~$X$ of measurable functions on~$\mathbb T$ by~$X_A = X \cap \nplus$.
Functions~$f \in X_A$ on~$\mathbb T$ are identified with the
analytic functions~$f$ on~$\mathbb D$ such that~$\lim_{r \to 1-} f_r (\zeta) = f (\zeta)$
for almost all~$\zeta \in \mathbb T$.
For example, from the Lebesgue spaces~$\lclass {p}$, $0 < p \leqslant \infty$
we get the usual Hardy spaces $\left(\lclass {p}\right)_A = \hclass {p}$, but
this definition also yields the Hardy-Lorentz spaces $\hclass {p, q}$,
the weighted Hardy spaces $\hclass {p} [w]$,
the variable exponent Hardy spaces $\hclass {p (\cdot)}$ and many others.
The closedness of~$X_A$ in~$X$ under the assumptions that we use is an easy
consequence of the following result. 

For convenience of notation, we often assume the circle~$\mathbb T$
to be parameterized by the angle~$\theta$ and simply write~$f (\theta)$ in place of~$f (e^{i \theta})$.
The Hardy-Littlewood maximal operator
is
$$
M f (\theta) = \sup_{0 < s \leqslant \pi} \frac 1 {2 s} \int_{\theta - s}^{\theta + s} |f|, \quad -\pi < \theta < \pi.
$$

\begin {theorem}
  \label {t:htssfc}
  Let~$X$ be a quasi-Banach lattice of measurable functions on~$\mathbb T$
  satisfying the Fatou property and $\delta$-convex with some~$\delta > 0$.
  Let~$0 \leqslant r_0 < 1$ and~~$f \in \hol (\mathbb D)$.
  If the Hardy-Littlewood maximal operator~$M$ is bounded in~$X^\delta$,
  the following conditions are equivalent:
  \begin {itemize}
  \item [(i)] $f \in X_A$;
    \item [(ii)] $f^+ \in X$;
  \item [(iii)] $\sup_{r_0 \leqslant r < 1} \|f_r\|_X < \infty$.
  \end {itemize}
  If the Hardy-Littlewood maximal operator~$M$ is bounded in $(X^\delta)'$,
  the above and the following are also equivalent:
  \begin {itemize}
    \item [(iv)]$S f \in X$.
  \end {itemize}  
  The corresponding quasi-norms~$\|f\|_X$, $\|f^+\|_X$, $\sup_{r_0 \leqslant r < 1} \|f_r\|_X$ and
  $|f (0)| + \|S f\|_X$ are equivalent under the respective assumptions. 
\end {theorem}

To give a few examples, the assumptions of Theorem~\ref {t:htssfc} are satisfied by the
rearrangement invariant quasi-Banach lattices with positive lower Boyd index
(by the Lorentz--Shimogaki theorem; see~\cite [Theorem~5.17] {BennettSharpley1988InterpolationOfOperators}).
We briefly mention without going into any detail
that the rearrangement invariant case can also be derived from the
results for the classical Hardy spaces
by interpolation (see~\cite [Theorem~5.16] {BennettSharpley1988InterpolationOfOperators}
and~\cite [Theorem~2.4] {Kisliakov1999InterpolationOfHpSpacesSomeRecentDevelopments}).
It is also curious to note that in particular the Lorentz spaces~$\lclass {p, \infty}$, $0 < p < \infty$,
satisfy the assumptions of Theorem~\ref {t:htssfc}, so these assumptions do not imply
any nontrivial concavity of~$X$.

A particularly important example are the weighted Lebesgue spaces~$\lclass {p} [w^{-1/p}]$ with~$0 < p < \infty$ and Muckenhoupt weights~$w \in \apclass {\infty}$
(since~$w \in \apclass {q}$ for large enough~$p < q < \infty$, the assumption is satisfied with~$\delta = p/q$;
for the theory of Muckenhoupt weights see, e.g., \cite [Chapter~5] {Stein1993HarmonicAnalysisRealVariableMethodsOrthogonalityAndOscillatoryIntegrals}).

Another important example
are variable exponent Lebesgue spaces~$\lclass {p (\cdot)}$ with a log-H\"older continuous exponent~$p (\cdot)$
satisfying~$0 < p_- \leqslant p (\cdot) \leqslant p_+ <  \infty$
(see, e.g., \cite {DieningHarjulehtoHastoRuzicka2011LebesgueAndSobolevSpacesWithVariableExponents};
the analytic variable exponent Lebesgue spaces were studied before to an extent
in~\cite {kokilashvilipaatashvili2006}, \cite {chaconchacon2019}, and in a few other papers).
A further natural generalization where the conditions for the boundedness of the maximal operator
are fairly well understood are the Orlicz-Musielak spaces; see, e.g., \cite {HarjulehtoHasto2019OrliczSpacesAndGeneralizedOrliczSpaces}.

The example~$X = \lclass {\infty}$ shows that the assumptions of Theorem~\ref {t:htssfc} are sharp in a sense,
as in this case~$S f \in X$ is equivalent to~$f \in \mathrm{BMOA}$, which can be verified by our argument
in the proof of Theorem~\ref {t:htssfc}.
The assumption that~$M$ is bounded in~$X^\delta$ in the first part of the theorem
is also sharp in the sense that~$(i) \Rightarrow (iii)$ fails
for~$X = \lclass {\infty} [w]$ with~$w \in \apclass {p}$, $p > 1$.  For instance,
with $w = |\theta|^\alpha \in \apclass {1 + \beta}$, $-\pi < \theta < \pi$ for~$0 < \alpha < \beta$
conditions~$(ii)$ and~$(iii)$ imply~$f (r) = 0$, $r_0 \leqslant r < 1$, and hence~$f = 0$.

To prove Theorem~\ref {t:htssfc}, we will use the following (fairly elementary but not altogether trivial)
basic fact about the duality
of the boundedness of the Hardy-Littlewood maximal operator.
\begin {proposition} [{\cite [Theorem~2] {rutsky2015en}}]
  \label {p:hld}
  Suppose that~$X$ is a normed lattice of measurable functions satisfying the Fatou property.
  If~$M$ is bounded in~$X'$, then it is bounded in both~$X^\alpha$ and~$(X^\alpha)'$ for all~$0 < \alpha < 1$.
\end {proposition}
In particular, by Proposition~\ref {p:hld} the second assumption
in Theorem~\ref {t:htssfc} that~$M$ is bounded in~$(X^\delta)'$ implies
that~$M$ is bounded in both~$X^\delta$ and~$(X^\delta)'$ with smaller values of~$\delta$,
and the first assumption of the theorem that~$M$ is bounded in~$X^\delta = ((X^\delta)')'$ implies
that~$M$ is bounded in~$(X^\delta)'^\alpha$ for~$0 < \alpha < 1$, i.e. that lattice~$(X^\delta)'$ in place of~$X$ also satisfies
the first assumption.

\begin {proof} [Proof of Theorem~\ref {t:htssfc}]
  Suppose that~$M$ is bounded in~$X^\delta$, and hence also in~$(X^\delta)'^\delta$ by Proposition~\ref {p:hld}.
  Both~$X$ and~$(X^\delta)'$ satisfy the so-called property~$(*)$:
for any~$f \in X$, $f \neq 0$ there exists a majorant~$w \geqslant |f|$
such that~$\log w \in \lclass {1}$ and~$\|w\|_X \lesssim \|f\|_X$ with a constant independent of~$f$
(see the remark after~\cite [Proposition~3.1] {Kisliakov1999InterpolationOfHpSpacesSomeRecentDevelopments};
it is easy to see that we can take~$w =
(M |f|^\delta)^{1/\delta} \in \lclass {1, \infty}^{1/\delta} = \lclass {\delta, \infty} \subset \lclass {q}$, $q < \delta$).
Such majorants are moduli~$w = |F|$ of outer functions~$F = \exp (\log w + i H \log w)$,
where~$H$ is the Hilbert transform.


$(i)\Rightarrow(ii)$
Suppose that~$f \in X_A$.
Let~$w$ be the majorant for~$f$ from the property~$(*)$ and~$F$ be the corresponding outer function~$|F| = w$
almost everywhere on~$\mathbb T$.
Then
$(f^+)^\delta \leqslant (F^+)^\delta = (|F|^\delta)^+ \lesssim M (w^\delta)$,
so by the assumption
$$
\|f^+\|_X^\delta = \| (f^+)^\delta \|_{X^\delta} \lesssim
\| M (w^\delta) \|_{X^\delta} \lesssim \| w^\delta \|_{X^\delta} \lesssim \|f\|_X^\delta.
$$

$(ii)\Rightarrow(iii)$
Condition~$f^+ \in X$ trivially implies~$f_r \in X$ uniformly in~$0 \leqslant r < 1$.

$(iii) \Rightarrow (i)$
Suppose that~$f_r \in X$ uniformly in~$r_0 \leqslant r < 1$.
Let~$h \in (X^\delta)'$ with norm~$1$, $w \in (X^\delta)'$ be the majorant~$w \geqslant |h|$
satisfying~$\log w \in \lclass {1}$ from property~$(*)$,
and let~$W$ be the corresponding outer function~$|W| = w$.
By 
~$(i) \Rightarrow (iii)$ for~$(X^\delta)'$ in place of~$X$ (see the remark after Proposition~\ref {p:hld}),
$W_r \in (X^\delta)'$ uniformly in~$0 \leqslant r < 1$.
Thus,
$$
\int |f_r|^\delta |W_r| \leqslant \| |f_r|^\delta \|_{X^\delta} \|W_r\|_{(X^\delta)'}
= \| f_r \|_{X}^\delta \|W_r\|_{(X^\delta)'}
$$
is uniformly bounded in~$r_0 \leqslant r < 1$,
hence also for~$0 \leqslant r < 1$ with the same constant due to the subharmonicity
of~$|G|^\delta$ for the analytic function~$G = f W^{1/\delta}$,
which means that~$G$
belongs to the classical Hardy space~$\hclass {\delta}$.
The nontangential boundary values of~$G$ define the same for~$f = G W^{-1/\delta}$,
and these satisfy
$$
\int |f|^\delta |h| \leqslant \int |f|^\delta w = \int |G|^\delta < \infty
$$
uniformly in~$h$.
It follows by the Fatou property
that~$|f|^\delta \in X^\delta$ and~$f \in X$.

We now turn to the second part of Theorem~\ref {t:htssfc} concerning condition~$(iv)$.
We note that the following argument is an example of
a general theory of square functions and can be naturally carried out in many settings;
see, e.g., \cite [Chapter I, \S6.4 and~\S8.23] {Stein1993HarmonicAnalysisRealVariableMethodsOrthogonalityAndOscillatoryIntegrals}.

{}
By Proposition~\ref {p:hld}, making~$\delta$ smaller if necessary allows us to assume
that~$0 < \delta \leqslant 1$
and that~$M$ is bounded in both~$X^\delta$ and~$(X^\delta)'$.

Notice that by the boundedness of the maximal operator
$$
\| |f (0)|^\delta\|_{X^\delta} \leqslant \|M (|f|^\delta)\|_{X^\delta} \lesssim \|f\|_X^\delta,
$$
so we may always assume that~$f (0) = 0$.
By the same boundedness of the maximal operator and the Fatou property, $\lclass {\infty} \subset (X^\delta)'$,
so~$X^\delta \subset \lclass {\infty}' = \lclass {1}$ and~$X \subset \lclass {1}^{1/\delta} = \lclass {\delta}$.

We will first establish the square function characterization for
weighted Lebesgue spaces~$X = \lclass {p} [w^{-1/p}]$,
$0 < p < \infty$,
uniformly in the Muckenhoupt weights~$w \in \apclass {1}$.
Let
$$
H = \lclass {2} [(0, 1), \left[r \log 1/r\right]^{-1/2}] \otimes \mathbb C^2.
$$
Let~$\mathcal S$ be a singular integral operator
$\mathcal S f (\theta) = \int_{-\pi}^\pi K (\theta - \eta) f (\eta) \mathrm d\eta$
in the sense of \cite [Chapter~I, \S1.5] {Stein1993HarmonicAnalysisRealVariableMethodsOrthogonalityAndOscillatoryIntegrals}
with kernel
$$
K (\theta) = \nabla \{r \mapsto P_r (\theta)\}_{0 \leqslant r < 1} \in H, \quad \theta \in (-\pi, \pi) \setminus \{0\},
$$
where~$P_r$ is the Poisson kernel.
It is well known that the scalar theory of Calder\'on-Zygmund operators and real Hardy spaces
is naturally extended to the setting of operators between spaces taking values in separable Hilbert spaces
(see, e.g., \cite [Chapter~I, \S6.4] {Stein1993HarmonicAnalysisRealVariableMethodsOrthogonalityAndOscillatoryIntegrals}).
Notice that for analytic functions~$f$
$$
\mathcal S f (\theta, r) = \nabla (P_r * f) (\theta, r) = \nabla f (r e^{i \theta}),
\quad 0 \leqslant r < 1, \quad -\pi < \theta < \pi,
$$
and so~$\|\mathcal S f (\theta, r)\|_{\mathbb C^2}^2 \leqslant 2 |f' (r e^{i \theta})|^2$.
The square function~$S$ is expressed
up to pointwise equivalence
as~$S f \asymp \|\mathcal S f\|_H$ almost everywhere
due to, e.g.,~$\log 1/r \asymp (1 -r)$ for~$1/4 \leqslant r < 1$ and
$$
\sup_{|z| \leqslant 1/4} |f'|^\delta \lesssim \int_{1/4 \leqslant |z| \leqslant 1/2} |f'|^\delta \lesssim \|S f\|_{\lclass {\delta}}^\delta \lesssim \|S f\|_X^\delta
$$
by the H\"older inequality.

$\mathcal S$ is a Calder\'on-Zygmund operator that satisfies the standard smoothness conditions
$\|(\partial / \partial \theta)^m K\|_H \lesssim 1/|\theta|^{(m + 1)}$,
$\theta \in (-\pi, \pi) \setminus \{0\}$ for all~$m \geqslant 0$ and is an isometry
as~$(\lclass {2})_{\mathbb R} \ominus \mathbb R \to \lclass {2} [H]$ up to a
constant ($\|f\|_{\lclass {2}}^2 = 2 \|\mathcal S f\|_{\lclass {2} [H]}^2$
for real-valued~$f$ with average~$0$; see~\eqref {e:greenf} below).
The adjoint operator~$\mathcal S^*$ taking $H$-valued functions into scalar functions
is also a Calder\'on-Zygmund operator satisfying the same assumptions on the kernel.

The real weighted Hardy space~$\hrclass {p} [w^{-1/p}]$ can be defined as 
 the set of distributions~$\mu$
satisfying~$\mu^+ = \sup_{0 \leqslant r < 1} |P_r * \mu| \in \lclass {p} [w^{-1/p}]$
(see~\cite [Chapter~VI] {StrombergTorchinsky1989WeightedHardySpaces}).
The weighted real Hardy space theory \cite [Chapter~XI, Theorem~12] {StrombergTorchinsky1989WeightedHardySpaces}
shows that both~$\mathcal S$ and $\mathcal S^*$
are bounded as operators between the real Hardy spaces~$\hrclass {p} [w^{-1/p}]$ and~$\hrclass {p} [H, w^{-1/p}]$
with the norm depending only on the~$\apclass {1}$-constant of the weight~$w$
for all~$0 < p < \infty$.  In fact, the weighted real Hardy space theory
in~\cite {StrombergTorchinsky1989WeightedHardySpaces} is developed
under a much weaker assumption~$w \in \apclass {\infty}$,
but it can be easily recovered from~$w \in \apclass {1}$  by Theorem~\ref {t:extr} below
as long as~$0 < p < \infty$ is arbitrary.
The weighted results with~$w \in \apclass {1}$
can also be achieved by a natural generalization of the standard theory of the real Hardy spaces (see, e.g., \cite [Chapter~III] {Stein1993HarmonicAnalysisRealVariableMethodsOrthogonalityAndOscillatoryIntegrals}),
but it is difficult to find a suitable reference.

It is well known that~$\hclass {p} [w^{-1/p}] \subset \hrclass {p} [w^{-1/p}]$;
see, e.g., \cite [Chapter~III, \S4.1, Proposition~1] {Stein1993HarmonicAnalysisRealVariableMethodsOrthogonalityAndOscillatoryIntegrals}).
The fact that a function~$f = \sum_{j \geqslant 0} c_j z^j \in \hclass {\delta}$
has boundary values~$f|_{\mathbb T} = \mu$, $f_r = P_r * \mu$ in the sense of distributions, can easily be derived from
a well-known estimate~$|c_j| \lesssim j^{1/\delta - 1}$, or any polynomial growth estimates on~$c_j$.

Thus, 
operators~$\mathcal S$ and~$\mathcal S^*$
are also bounded between the corresponding weighted analytic Hardy spaces.
This shows that
$$\|S f\|_{\lclass {p} [w^{-1/p}]}
\asymp \|\mathcal S f\|_{\lclass {p} [H, w^{-1/p}]} \lesssim \|f\|_{\hrclass {p} [w^{-1/p}]} \lesssim \|f\|_{\hclass {p} [w^{-1/p}]},
$$
i.e.~$(i) \Rightarrow (iv)$ for~$X = \lclass {p} [w^{-1/p}]$ with~$w \in \apclass {1}$.

To obtain the converse implication~$(iv) \Rightarrow (i)$ for these~$X$,
we employ the Green formula (see, e.g., \cite [Section X.D] {Koosis1998IntroductionToHpSpaces})
  \begin {equation}
    \label {e:greenf}
    \int_{-\pi}^\pi f g\, \mathrm d\theta =
    2 \int_{\mathbb D} (\nabla f \cdot \nabla g) \log (1/|z|)\, \mathrm dx\, \mathrm dy =
    2 \int_{-\pi}^\pi (\mathcal S f, \mathcal S g)_H\, \mathrm d\theta
  \end {equation}
  valid for continuous real-valued harmonic~$f$ and~$g$ such that~$f (0) = 0$.
  Taking the harmonic extensions of a given continuous function~$f$ on~$\mathbb T$ with average 0 and
  an arbitrary continuous function~$g$ on~$\mathbb T$ yields the reproducing formula
  $f = 2 \mathcal S^* \mathcal S f$ almost everywhere that naturally extends to complex-valued
  functions~$f$.
  In particular,
  it follows that~$\|f_r\|_X \lesssim \|\mathcal S^*\|_{X_A (H) \to X_A} \|S f_r\|_{X}$.
  By the monotonicity of the weight in the definition of the square function,
  $S f_r \leqslant (1/r) S f$,
  so by~$(i) \Rightarrow (iii)$
  $$
  \|f\|_{X_A} \lesssim \sup_{1/2 \leqslant r < 1} \|f_r\|_X \lesssim \|S f\|_X.
  $$

  Thus, $(i) \Leftrightarrow (iv)$ is true for~$X = \lclass {p} [w^{-1/p}]$ with~$w \in \apclass {1}$.
  The same equivalence
  for arbitrary lattices~$X$ follows at once from a suitable extrapolation theorem.
  Namely, it suffices to apply the following result
  to sets
  $$
  \mathcal F = \left\{ (S f, f) \mid f \in \hclass {\delta} \right\}, \quad
  \mathcal F' = \left\{ (f, S f) \mid f \in \hclass {\delta} \right\}.
  $$
\end {proof}
  \begin {theorem} [{\cite [Theorem~4.6] {weightsandextrapolation}}]
    \label {t:extr}
    Let~$\mathcal F$ be a set of couples
  of measurable functions on a space of homogeneous type~$\mathcal X$.
      Let~$X$ be a quasi-Banach lattice of measurable functions on~$\mathcal X$
  satisfying the Fatou property and $\delta$-convex with some~$\delta > 0$.
  Suppose that the Hardy-Littlewood maximal operator~$M$ is bounded in $(X^\delta)'$,
  and $\int |f|^\delta w \lesssim \int |g|^\delta w$
  uniformly in~$(f, g) \in \mathcal F$ and~$w \in \apclass {1}$ (i.e. uniformly on the sets of~$\apclass {1}$
  weights~$\{w \mid M w \leqslant C w\}$ with a given arbitrary constant~$C$).
  Then~$\|f\|_X \lesssim \|g\|_X$ uniformly in~$(f, g) \in \mathcal F$.
  \end {theorem}
  In~\cite {weightsandextrapolation}, Theorem~\ref {t:extr}
  was only established for Banach lattices~$X$ with a different set of assumptions.
  For the sake of clarity, we will now give a simple proof of this result.
  \begin {proof} [Proof of Theorem~\ref {t:extr}]
  Let~$(f, g) \in \mathcal F$.  By the Fatou property~$X^\delta = ((X^\delta)')'$,
  so there exists some~$h \in (X^\delta)'$ with norm~$1$ such that
  $$
  \|f\|_X^\delta = \| |f|^\delta \|_{X^\delta} \leqslant 2 \int |f|^\delta |h|.
  $$
  The well-known Rubio de Francia construction
  $$
  w_0 = |h|, \quad
  w_{j + 1} = M w_j / (2 \|M\|_{(X^\delta)' \to (X^\delta)'}), \quad
  w = \sum_{j \geqslant 0} w_j
  $$
  yields a majorant~$w \geqslant |h|$, $M w \lesssim w$ uniformly in~$h$,
  such that~$\|w\|_{(X^\delta)'} \leqslant 2$.
  It follows that
  $$
  \|f\|_X^\delta \leqslant 2 \int |f|^\delta w \lesssim \int |g|^\delta w \leqslant
  \| |g|^\delta \|_{X^\delta} \|w\|_{(X^\delta)'} \leqslant 2 \|g\|_X^\delta.
  $$
  \end {proof}

  \section {Composition operators on weighted Bloch spaces}
  \label {s:comp}
Consider a weight function $\omega$, that is, a non-decreasing, continuous,
unbounded function $\omega : [0, 1) \to (0,+\infty)$. We extend $\omega$ to a radial weight on $\Dbb$
setting $\omega(z) = \omega(|z|)$, $z \in \Dbb$.

Given a radial weight $\omega$, the Bloch spaces $\bloch^\omega (\Dbb)$ consists of functions
$f \in \hol(\Dbb)$ such that
\begin{equation}\label{e_bloch}
|f^\prime (z)| \le C \omega(z),\quad z \in \Dbb,
\end{equation}
for some constant $C > 0$.
If $\omega_1$ and $\omega_2$ are equivalent radial weights, then the identities
\[
\| f\|_{\bloch^{\omega_j} (\Dbb)} = |f(0)| + \sup_{z\in\Dbb}
\frac{|f^\prime (z)|}{\omega_j(z)},\quad j = 1, 2,
\]
define equivalent norms on the Banach space
$\bloch^{\omega_1} (\Dbb) = \bloch^{\omega_2} (\Dbb)$.

By definition, $\omega$ is said to be
log-convex
if $\log \omega (t)$ is a convex function of~$\log t$,
$0 < t < 1$.
  
Let $\ph: \Dbb \to \Dbb$ be a holomorphic function.
The composition operator $C_\varphi$ is defined as
\[
C_\ph f(z) = f(\ph(z)),\quad f\in\hol(\Dbb).
\]

The following lemma is a straightforward corollary of Theorem~1.2 from \cite{AbD15}.

\begin{lemma}\label{l_BLMS}
Let $\omega$ be a log-convex weight function.
There exist $f_1, f_2 \in \bloch^\omega$ such that
\[
|f^\prime_1(z)|^2 + |f^\prime_2(z)|^2 \ge \omega^2(z), \quad z\in\Dbb.
\]
\end{lemma}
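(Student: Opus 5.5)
This lemma should follow directly from Theorem~1.2 of \cite{AbD15}. As I recall it, that theorem says: for a log-convex weight function $\omega$ there exist $g_1, g_2 \in \hol(\Dbb)$ with
\[
|g_1(z)| + |g_2(z)| \asymp \omega(z), \quad z \in \Dbb.
\]
The plan is to apply it with $g_j$ playing the role of the derivatives $f_j'$, and then to pass from a sum of moduli to a sum of squares.

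First I obtain the two functions. Log-convexity is exactly the hypothesis of that theorem; I would also check that the standing assumptions (continuous, nondecreasing, unbounded) match the ones used in \cite{AbD15}. This gives $g_1, g_2 \in \hol(\Dbb)$ with $|g_1| + |g_2| \le C\omega$ and $|g_1| + |g_2| \ge c\,\omega$ on $\Dbb$, for some constants $0 < c \le C$.

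Next I integrate. Put $f_j(z) = \int_0^z g_j(\zeta)\,\mathrm d\zeta$, so that $f_j \in \hol(\Dbb)$ and $f_j' = g_j$. The upper estimate gives $|f_j'| \le C\omega$, which is precisely condition~\eqref{e_bloch}; hence $f_j \in \bloch^\omega$. For the lower estimate I use the elementary inequality $(a+b)^2 \le 2(a^2+b^2)$:
\[
|f_1'(z)|^2 + |f_2'(z)|^2 \ge \tfrac12 \left(|g_1(z)| + |g_2(z)|\right)^2 \ge \tfrac{c^2}{2}\,\omega^2(z).
\]

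Finally I normalize the constant. Replacing $f_j$ by $(\sqrt2/c)\, f_j$ keeps both functions in $\bloch^\omega$ and produces exactly the stated inequality $|f_1'|^2 + |f_2'|^2 \ge \omega^2$.

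The only real obstacle is matching the precise statement of Theorem~1.2 in \cite{AbD15}: its normalization, and whether it is phrased for $\omega$ itself or in terms of the Bloch space. If it is phrased in Bloch-space form, say as the existence of $f_1, f_2 \in \bloch^\omega$ with $|f_1'| + |f_2'| \asymp \omega$, the integration step is simply skipped. Everything else is the routine computation above.
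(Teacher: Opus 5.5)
Your proposal is correct and follows the paper's route: the paper gives no separate argument and simply calls the lemma a straightforward corollary of Theorem~1.2 in \cite{AbD15}. That result provides holomorphic $g_1, g_2$ with $|g_1|+|g_2|\asymp\omega$, and your three steps (taking primitives, using $(a+b)^2\le 2(a^2+b^2)$, rescaling by $\sqrt2/c$) are exactly the routine details the paper leaves implicit.
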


A quasi-normed lattice~$X$ is said to have order continuous quasi-norm if
for any nonincreasing sequence~$f_j \in X$, $f_j \geqslant 0$,
$f_j \to 0$ almost everywhere, it follows that~$\|f_j\|_X \to 0$.
It is easy to see that order continuity is equivalent to the dominated convergence theorem:
for any~$f_j \to f$ almost everywhere such that~$|f_j| \leqslant g \in X$ it follows that~$\|f_j - f\|_X \to 0$.

The following theorem was obtained for the classical Hardy spaces~$\hclass {p}$
and the standard Bloch space in~\cite {Kw1}.
See also~\cite {Kw2}, \cite {ChH1}, \cite {ChH2} for multidimensional generalizations.
\begin{theorem}
  \label {t:cph} 
Let $\ph: \Dbb\to \Dbb$ be a holomorphic function.
Let $\omega$ be a log-convex weight function. 
Suppose that~$X$ is a quasi-normed lattice of measurable functions on~$\mathbb T$ satisfying the assumptions
of Theorem~\ref {t:htssfc}.
Then the following properties are equivalent:
\begin{itemize}
  \item [(i)] $\cph: \bloch^\omega \to X_A$  is a bounded operator;
  \item [(ii)] $\left(\int_0^1 \omega^2 (\ph(r\za)) |\ph^\prime(r\za)|^2 (1-r)\, dr\right)^{\frac12} \in X$.
\end{itemize}
If, additionally, $X$ has order continuous quasi-norm, then the above implies that
$\cph: \bloch^\omega \to X_A$  is a compact operator.
\end{theorem}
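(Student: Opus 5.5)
The plan is to apply the square function characterization of Theorem~\ref{t:htssfc}, condition~(iv), to $g = \cph f = f\circ\ph$. We have $(f\circ\ph)'(z) = f'(\ph(z))\ph'(z)$, so
$$
S(\cph f)(\za) = \left(\int_0^1 |f'(\ph(r\za))|^2 |\ph'(r\za)|^2 (1-r)\,dr\right)^{1/2}.
$$
Denote by $I_\ph(\za)$ the function in condition~(ii) of Theorem~\ref{t:cph}.

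\emph{Step 1: (ii)$\Rightarrow$(i).} For $f\in\bloch^\omega$ we have $|f'(\ph(z))| \le \|f\|_{\bloch^\omega}\,\omega(\ph(z))$ pointwise, hence $S(\cph f) \le \|f\|_{\bloch^\omega} I_\ph$. The lattice property and Theorem~\ref{t:htssfc} then give
$$
\|\cph f\|_{X_A} \lesssim |f(\ph(0))| + \|f\|_{\bloch^\omega}\|I_\ph\|_X .
$$
The term $|f(\ph(0))|$ is controlled by $C(|\ph(0)|)\,\|f\|_{\bloch^\omega}$, by integrating $f'$ along the segment $[0,\ph(0)]$ and using that $\omega$ is bounded on compact subsets. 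Constants lie in $X$ because $\lclass{\infty}\subset X$: this follows from (ii), since $I_\ph\in X$ is nonzero unless $\ph$ is constant, and the constant case is trivial. Alternatively, use that $M$ is bounded on $X^\delta$, so $M(|I_\ph|^\delta)\gtrsim$ const.

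\emph{Step 2: (i)$\Rightarrow$(ii).} Take $f_1,f_2\in\bloch^\omega$ from Lemma~\ref{l_BLMS}. Then pointwise
$$
I_\ph^2 \le S(\cph f_1)^2 + S(\cph f_2)^2, \qquad\text{so}\qquad I_\ph \le S(\cph f_1)+S(\cph f_2).
$$
Boundedness gives $\cph f_j\in X_A$, and Theorem~\ref{t:htssfc} (i)$\Rightarrow$(iv) gives $S(\cph f_j)\in X$. Since $X$ is a quasi-normed lattice, $I_\ph\in X$.

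\emph{Step 3: compactness.} Let $(f_n)$ be bounded in $\bloch^\omega$. By normal families we may pass to a subsequence converging locally uniformly to some $f\in\bloch^\omega$, with $f_n'\to f'$ locally uniformly. Set $g_n=f_n-f$; it suffices to show $\|\cph g_n\|_{X_A}\to0$. First, $|g_n(\ph(0))|\to 0$. Next,
$$
S(\cph g_n)(\za)^2=\int_0^1 |g_n'(\ph(r\za))|^2|\ph'(r\za)|^2(1-r)\,dr .
$$
The integrand is dominated by $C\,\omega^2(\ph(r\za))|\ph'(r\za)|^2(1-r)$ and tends to $0$ pointwise. For a.e.\ $\za$ with $I_\ph(\za)<\infty$ (which holds a.e.\ since $X\subset\lclass{\delta}$), dominated convergence in $r$ gives $S(\cph g_n)(\za)\to0$. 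Moreover $S(\cph g_n)\le C I_\ph\in X$. Order continuity, in its dominated-convergence form stated before the theorem, then yields $\|S(\cph g_n)\|_X\to0$, and Theorem~\ref{t:htssfc} finishes the argument.

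The main obstacle is Step~3. The naive estimate is not uniform, since $\ph$ may approach the boundary, so the pointwise argument is needed. One must check that pointwise a.e.\ convergence plus domination suffices; the dominated-convergence equivalent of order continuity handles this. A minor technicality is the constant term $|f(0)|$ and the fact that constants belong to $X$, which follows from boundedness of $M$ on $X^\delta$.
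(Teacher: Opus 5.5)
Your proposal is correct and follows the paper's proof essentially step by step. For (ii)$\Rightarrow$(i) you use the pointwise bound $S(\cph f)\le \|f\|_{\bloch^\omega} I_\ph$ together with Theorem~\ref{t:htssfc}, and for (i)$\Rightarrow$(ii) the pair $f_1,f_2$ from Lemma~\ref{l_BLMS}, exactly as the paper does. The only deviation is in the compactness step: you obtain $S(\cph g_n)\to 0$ a.e.\ directly by dominated convergence in $r$, whereas the paper splits the integral at $t$, controlling the tail by order continuity and the head by compactness of $\ph(t\bar{\Dbb})$. This is a harmless streamlining that invokes order continuity only once.
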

\begin{proof}
  $(i)\Rightarrow(ii)$
  Notice that~$(C_\ph f)' = (C_\ph f') \ph'$.
Let $f_1, f_2\in \bloch^\omega$ be those provided by Lemma~\ref{l_BLMS}.
By $(i)$ and Theorem~\ref{t:htssfc},
\[
\sum_{j=1}^2 \left(\int_0^1 |f_j^\prime(\ph(r\za))|^2 |\ph^\prime(r\za)|^2 (1-r)\, dr\right)^{\frac12}\in X.
\]
Since $X$ is a lattice, we obtain
\begin {equation*}
  \label {e:az}
A (\zeta) = \left(\int_0^1 \omega^2(\ph(r\za)) |\ph^\prime(r\za)|^2 (1-r)\, dr\right)^{\frac12}\in X,
\end {equation*}
as required.

$(ii)\Rightarrow(i)$
Let $f\in\bloch^\omega$. 
By the definition of $\bloch^\omega$,
$$
|f^\prime(\ph(r\za))|^2 \leqslant \|f\|_{\bloch^\omega}^2 \omega^2(\ph(r\za)),
\quad 0<r<1, \quad \za\in\Tbb.
$$
Therefore,
\[
S (C_\ph f) = \left(\int_0^1 |(\cph f')(r\za)|^2 |\ph^\prime(r\za)|^2 (1-r)\, dr\right)^{\frac{1}{2}}
\lesssim A (\zeta) \|f\|_{\bloch^\omega}.
\]
For any~$z \in \mathbb D$, $f (z) = f (0) + \int_0^z f' (\zeta)\,\mathrm d\zeta$, so
$$
|f (z)| \leqslant |f (0)| + \int_0^z |f' (\zeta)|\, \mathrm d|\zeta| \leqslant
(1 + |z| \omega (|z|)) \|f\|_{\bloch^\omega}.
$$
By Theorem~\ref{t:htssfc},
$$
\|\cph f\|_{X_A} \lesssim |f (\ph (0))| + \|S (C_\ph f)\|_X \lesssim
(1 + |\ph (0)| \omega (|\ph (0)|) + \|A\|_X)  \|f\|_{\bloch^\omega},
$$
so~$(i)$ holds.

Now suppose that~$(ii)$ is satisfied and~$X$ has order continuous quasi-norm.
Let~$g_j$ be a bounded sequence in~$\bloch^\omega$.
It is uniformly bounded on compact sets in~$\mathbb D$, so there exists some analytic function~$g$
such that a subsequence~$g_{j'}$ of~$g_j$ converges to~$g$ uniformly on compact sets in~$\mathbb D$.
Clearly, $g \in \bloch^\omega$, and it suffices to show that~$C_\ph g_{j'} \to C_\ph g$ in~$X_A$.

Let~$h_j = g_{j'} - g$ and suppose that~$\varepsilon > 0$.
Notice that
$$
I_2 = \left(\int_t^1 \omega^2(\ph(r\za)) |\ph^\prime(r\za)|^2 (1-r)\, dr\right)^{\frac12} \to 0, \quad
t \to 1-,
$$
so by the order continuity of the quasi-norm~$\|I_2\|_X \leqslant \varepsilon$
with some~$0 < t < 1$.
Since~$\ph (t \bar{\mathbb D})$ is compact,
$$
I_1 = \left(\int_0^t |h_j' (\ph (r \zeta))|^2 |\ph^\prime(r\za)|^2 (1-r)\, dr\right)^{\frac12} \to 0,
\quad
j \to \infty,
$$
and~$I_1 \leqslant A$, so by the dominated convergence
$\|I_1\|_X \to 0$ as~$j \to \infty$.
Thus,
$\|S (C_\ph h_j)\|_X \lesssim \|I_1\|_X + \|I_2\|_X \lesssim \varepsilon$
for large enough~$j$.
By the arbitrariness of~$\varepsilon$ it follows that~$C_\ph h_j \to 0$ in~$X_A$ as claimed.
\end{proof}

The following simple example shows that without the order continuity assumption~$C_\varphi$
may fail to be compact.
Let~$\gamma > 1$, $0 < \alpha < 1$, $\omega (r) = (1 - r)^{-\gamma}$, $\varphi (z) = 1 - (1 - z)^\alpha$,
$b_j \to 1+$, $g_j = (b_j - z)^{1 - \gamma}$, $g = (1 - z)^{1 - \gamma} \in \bloch^\omega$.
Then~$|g_j'| \leqslant |g'|$, and~$g_j \to g$ uniformly on compact sets.
Also, $C_\ph g = (1 - z)^{\alpha (1 - \gamma)}$,
$|g_j| \leqslant |g|$, and hence~$|C_\ph g_j| \leqslant |C_\ph g| \in \lclass {p, \infty}$
with~$1/p = \alpha (\gamma - 1)$.

Suppose that~$|1 - \zeta| \leqslant 1/2$ and let~$t = 1 - |1 - \zeta|$.
Notice that for~$0 < r < t$,
$1 - |1 - (1 - r \zeta)^\alpha| \asymp (1 - r)^\alpha$, so
$$
I_1^2 = \int_0^t \omega^2 (\ph (r \zeta)) |\ph' (r \zeta)|^2 (1 - r) \mathrm dr
\lesssim
\int_0^t (1 - r)^{2 \alpha (1 - \gamma) - 1} \mathrm dr
\lesssim
|1 - \zeta|^{2 \alpha (1 - \gamma)}.
$$
For~$t \leqslant r < 1$,
$1 - |1 - (1 - r \zeta)^\alpha| \asymp |1 - \zeta|^\alpha$, and
$$
I_2^2 = \int_t^1 \omega^2 (\ph (r \zeta)) |\ph' (r \zeta)|^2 (1 - r) \mathrm dr
\lesssim
|1 - \zeta|^{2 \alpha (1 - \gamma) - 2}
\int_t^1 (1 - r) \mathrm dr
\lesssim
|1 - \zeta|^{2 \alpha (1 - \gamma)}.
$$
The last estimate is also true for~$|1 - \zeta| > 1/2$ with~$t = 0$.
Thus, $A (\zeta) \lesssim I_1 + I_2 \lesssim |1 - \zeta|^{\alpha (1 - \gamma)} \in \lclass {p, \infty}$,
so condition~$(ii)$ of Theorem~\ref {t:cph} is satisfied
with the Lorentz space~$X = \lclass {p, \infty}$.
However, it is easy to see that for any bounded function~$h$, and in particular for~$h = C_\ph g_j$,
$$
\|C_\ph g - h\|_X^{p} \geqslant
\lim_{t \to \infty} t |\{ |(1 - \zeta)^{\alpha (1 - \gamma)} - h|^{p} > t\}|
\geqslant
\frac 1 2 \lim_{t \to \infty} t |\{ |1 - \zeta|^{-1} > t\}| = \frac 1 2.
$$
  {}



\providecommand{\MR}{\relax\ifhmode\unskip\space\fi MR }
\providecommand{\MRhref}[2]{%
  \href{http://www.ams.org/mathscinet-getitem?mr=#1}{#2}
}
\providecommand{\href}[2]{#2}

\end{document}